\newcommand{\FF}{\mathbf{F}}
\newcommand{\ZZ}{\mathbf{Z}}
\newcommand{\CALA}{\mathcal{A}}
\newcommand{\CALS}{\mathcal{S}}
\newcommand{\intr}{\operatorname{Int}}
\newcommand{\aut}{\operatorname{Aut}}
\newtheorem{theorem}{Theorem}
\newtheorem{proposition}{Proposition}
\theoremstyle{definition}
\newtheorem{definition}{Definition}
\newtheorem{example}{Example}
\begin{document}

\title{Small connected quandles.}
\author{F.J.-B.J.~Clauwens}

\maketitle

In \cite{hendersonmn} an attempt was made to classify quandles of small order.
There appear to be $404$ quandles of order $5$, 
$6658$ quandles of order $6$,
$152900$ quandles of order $7$
and $5225916$ quandles of order $8$.
For order $9$ the search space was too large to finish the computation.
If however one restricts to the important subclass of connected quandles
then classification seems to be more accessible to computation,
comparable to the classification of groups of given order.
It is the purpose of this note to classify connected quandles up to order $14$,
and in particular to show that there is no connected quandle of order $14$.

\section{Introduction}

Note that our notation and terminology differs somewhat from 
our main reference \cite{andrug3}.
For this reason we repeat some definitions and proofs.

\begin{definition}
A quandle is a set $Q$ equipped with a binary operation $\star$
with the following properties:
\begin{itemize}
\item
$a\star a=a$ for all $a\in Q$.
\item
$(a\star b)\star c=(a\star c)\star(b\star c)$ for all $a,b,c\in Q$.
\item
For all $a,b\in Q$ there is a unique $c\in Q$ such that $c\star b=a$.
\end{itemize}
\end{definition}

\begin{example}
\label{ex1}
Let  be given a group $G$.
Let $Q$ be $G$ as a set and define $\star$ by
$a\star b=b^{-1}ab$.
Then $Q$ is a quandle.
\end{example}

\begin{definition}
A quandle homomorphism is a map $\rho$ of quandles satisfying $\rho(a\star b)=\rho(a)\star\rho(b)$ for all $a,b\in Q$.
\end{definition}

If $Q$ is a quandle and $a\in Q$ then the map $\phi_a\colon Q\to Q$
given by $\phi_a(b)=b\star a$ is quandle automorphism of $Q$.
In the group $\aut(Q)$ of all quandle automorphisms of $Q$
the $\phi_a$ generate a subgroup called the group $\intr(Q)$
of interior automorphisms.
This defines a map $\phi\colon Q\to\intr(Q)$,
which is a quandle homomorphism with respect
to the quandle structure on $\intr(Q)$ given in example \ref{ex1}.
If $\phi$  is injective then $Q$ is called faithful.

\begin{definition}
A quandle $Q$ is called connected if the action of $\intr(Q)$ on $Q$ is transitive.
\end{definition}

\begin{example}
Let $M$ be an abelian group and $T\colon M\to M$ be an automorphism.
Let $Q$ be  $M$ as a set, and define $\star$ by
$a\star b=T(a)+b-T(b)$.
Then $Q$ is a quandle which is connected iff $1-T$ is invertible.
\end{example}

\begin{proposition}
\label{abel}
If a quandle $P$ is connected and $G=\intr(P)$ is abelian then $P$ is trivial.
\end{proposition}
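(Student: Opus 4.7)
The plan is to chase the interplay between the map $\phi\colon P\to G$ and the quandle operation. First I would observe that the second quandle axiom can be read as $\phi_a\phi_b(x)=(x\star b)\star a=(x\star a)\star(b\star a)=\phi_{b\star a}\phi_a(x)$, which yields the conjugation identity $\phi_{b\star a}=\phi_a\phi_b\phi_a^{-1}$ in $G$. Under the hypothesis that $G$ is abelian this collapses to the clean identity $\phi_{b\star a}=\phi_b$ for all $a,b\in P$.

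Next I would lift this invariance from the generators $\phi_a$ to all of $G$: a short induction on word length in the $\phi_a^{\pm 1}$ shows that $\phi_{g(b)}=\phi_b$ for every $g\in G$ and every $b\in P$. Connectedness then kicks in: since $G$ acts transitively on $P$, this forces all the elements $\phi_a$ (with $a$ ranging over $P$) to coincide in $G$. Call this common value $\varphi$.

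Since the $\phi_a$ generate $G$, we then have $G=\langle\varphi\rangle$. But $\varphi(a)=\phi_a(a)=a\star a=a$ for every $a\in P$, so $\varphi$ acts as the identity on $P$; because $G$ is by construction a subgroup of $\aut(P)$, this forces $\varphi=1_G$ and hence $G$ is the trivial group. A transitive action of the trivial group on the nonempty set $P$ forces $|P|=1$, so $P$ is trivial.

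The only step with real content is the induction that propagates $\phi_{b\star a}=\phi_b$ from a single generator to an arbitrary $g\in G$; once that is in hand, commutativity together with connectedness collapses $\phi$ to a single element of $G$, and the tautological fact $\phi_a(a)=a$ finishes off the proof.
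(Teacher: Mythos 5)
Your proposal is correct and follows essentially the same route as the paper: the conjugation identity $\phi_{b\star a}=\phi_a\phi_b\phi_a^{-1}$ plus commutativity and connectedness forces all $\phi_a$ to coincide, and idempotence ($\phi_a(a)=a$) finishes the argument. The paper simply concludes $a\star b=\phi_b(a)=\phi_a(a)=a$ directly, while you take the slightly longer (but equivalent) detour through showing the common value is the identity, hence $\intr(P)$ is trivial and transitivity gives $|P|=1$.
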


\begin{proof}
Let $a,b\in P$.
Then by connectednes $\phi_a$ and $\phi_b$ are in the same conjugacy class of $G$.
Since $G$ is abelian this means that $\phi_b=\phi_a$.
Therefore $a\star b=\phi_b(a)=\phi_a(a)=a$.
\end{proof}

\begin{definition}
(See \cite{andrug3}).
Let $Q$ be a quandle and $S$ be a set.
A dynamical cocycle on $Q$ with values in $S$ is a map
$\alpha\colon Q\times Q\times S\times S\to S$ with the following properties:
\begin{itemize}
\item
$\alpha_{x,x}(s,s)=s$ for all $x\in Q$ and $s\in S$.
\item
$\alpha_{x\star y,z}(\alpha_{x,y}(s,t),u)=
\alpha_{x\star z,y\star z}(\alpha_{x,z}(s,u),\alpha_{y,z}(t,u))$ for all $x,y,z\in Q$ and $s,t,u\in S$.
\item
For all $x,y\in Q$ and all $t\in S$ the map $s\mapsto\alpha_{x,y}(s,t)$ is a bijection $S\to S$.
\end{itemize}
In this case the set $Q\times S$ can be equipped with a quandle structure by the formula
$(x,s)\star(y,t)=(x\star y,\alpha_{x,y}(s,t))$.
\end{definition}

\begin{proposition}
If $Q$ is connected and $\rho\colon Q\to P$ is a surjective quandle homomorphism,
then $P$ is connected.
\end{proposition}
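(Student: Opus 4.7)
The plan is to show that $\intr(P)$ acts transitively on $P$ by transporting the transitive action of $\intr(Q)$ on $Q$ through $\rho$. The key ingredient is a compatibility relation between the inner automorphisms on either side: for every $a\in Q$ and $b\in Q$,
\[
\rho(\phi_a(b))=\rho(b\star a)=\rho(b)\star\rho(a)=\phi_{\rho(a)}(\rho(b)),
\]
so $\rho\circ\phi_a=\phi_{\rho(a)}\circ\rho$. Since $\phi_a$ and $\phi_{\rho(a)}$ are bijections, this intertwining identity also yields $\rho\circ\phi_a^{-1}=\phi_{\rho(a)}^{-1}\circ\rho$.

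Next I would exploit surjectivity. Given $p,q\in P$, choose preimages $a,b\in Q$ with $\rho(a)=p$ and $\rho(b)=q$. By connectedness of $Q$ there is some $g\in\intr(Q)$ with $g(a)=b$; write $g=\phi_{c_n}^{\varepsilon_n}\cdots\phi_{c_1}^{\varepsilon_1}$ with $c_i\in Q$ and $\varepsilon_i\in\{\pm1\}$. Applying the intertwining identity inductively, one obtains
\[
\rho\circ g=\bigl(\phi_{\rho(c_n)}^{\varepsilon_n}\cdots\phi_{\rho(c_1)}^{\varepsilon_1}\bigr)\circ\rho,
\]
so setting $h=\phi_{\rho(c_n)}^{\varepsilon_n}\cdots\phi_{\rho(c_1)}^{\varepsilon_1}\in\intr(P)$ we get $h(p)=h(\rho(a))=\rho(g(a))=\rho(b)=q$. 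Hence $\intr(P)$ acts transitively on $P$, which is exactly connectedness.

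There is no real obstacle here; the only thing to be careful about is that elements of $\intr(Q)$ are products of $\phi_a$ \emph{and their inverses} (since $\intr(Q)$ is defined as the subgroup they generate, not just the submonoid), which is why one needs the intertwining relation for $\phi_a^{-1}$ as well. Once that is in place, a one-line induction on the word length of $g$ finishes the argument.
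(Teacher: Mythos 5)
Your proof is correct and follows essentially the same route as the paper: lift the two points of $P$ via surjectivity, use connectedness of $Q$ to get a word in the $\phi_{z_i}^{\pm1}$ moving one lift to the other, and push it down through $\rho$. Your explicit verification of the intertwining identity $\rho\circ\phi_a^{\pm1}=\phi_{\rho(a)}^{\pm1}\circ\rho$ just spells out the step the paper leaves implicit.
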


\begin{proof}
Let be given $a,b\in P$.
Choose $x,y\in Q$ such that $\rho(x)=a$ and $\rho(y)=b$.
Since $Q$ is connected there are $z_1,\dots,z_d\in Q$
and $e_1,\dots,e_d\in\{-1,+1\}$ such that
$\phi_{z_1}^{e_1}(\phi_{z_2}^{e_2}\dots(\phi_{z_d}^{e_d}(x))=y$.
But then
$\phi_{\rho(z_1)}^{e_1}(\phi_{\rho(z_2)}^{e_2}\dots(\phi_{\rho(z_d)}^{e_d}(a))=b$.
\end{proof}

\begin{proposition}
If $\rho\colon Q\to P$ is a surjective quandle homomorphism and $P$ is connected
then for $a,b\in P$ there is a bijection $\rho^{-1}(a)\cong\rho^{-1}(b)$.
In particular the cardinality of $P$ divides the cardinality of $Q$.
\end{proposition}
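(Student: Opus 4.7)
The plan is to transport fibers along the automorphisms $\phi_y$ and use connectedness of $P$ to chain such transports from $a$ to $b$.

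First I would record the basic naturality: for any $y\in Q$ and $x\in Q$,
\[
\rho(\phi_y(x))=\rho(x\star y)=\rho(x)\star\rho(y)=\phi_{\rho(y)}(\rho(x)),
\]
so the bijection $\phi_y\colon Q\to Q$ (it is a bijection by the third quandle axiom, which guarantees that $\phi_y$ has a two-sided inverse) restricts to a bijection $\rho^{-1}(c)\to\rho^{-1}(\phi_{\rho(y)}(c))$ for every $c\in P$. The same holds for $\phi_y^{-1}$, with $\phi_{\rho(y)}$ replaced by $\phi_{\rho(y)}^{-1}$.

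Next, since $P$ is connected, pick elements $c_1,\dots,c_d\in P$ and signs $e_1,\dots,e_d\in\{-1,+1\}$ with $\phi_{c_1}^{e_1}\phi_{c_2}^{e_2}\cdots\phi_{c_d}^{e_d}(a)=b$, exactly as in the previous proposition. Because $\rho$ is surjective, choose lifts $y_i\in Q$ with $\rho(y_i)=c_i$. Then the composite automorphism $\Phi=\phi_{y_1}^{e_1}\phi_{y_2}^{e_2}\cdots\phi_{y_d}^{e_d}$ of $Q$ satisfies $\rho\circ\Phi=(\phi_{c_1}^{e_1}\cdots\phi_{c_d}^{e_d})\circ\rho$ by iterating the naturality above. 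Consequently $\Phi$ restricts to a bijection $\rho^{-1}(a)\to\rho^{-1}(b)$, which is the desired correspondence.

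Finally, once all fibers of $\rho$ have the same cardinality $n$, partitioning $Q$ into the $|P|$ fibers gives $|Q|=n\cdot|P|$, so $|P|$ divides $|Q|$. There is no real obstacle here beyond being careful that $\phi_y$ is a bijection (which is exactly the content of the invertibility axiom for $\star$) and that the naturality square can be iterated; the argument is essentially a lift of the connectedness path already used in the preceding proposition.
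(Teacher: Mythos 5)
Your argument is correct and follows exactly the paper's route: use connectedness of $P$ to write $b$ as a word in the $\phi_{c_i}^{\pm1}$ applied to $a$, lift the $c_i$ through the surjection $\rho$, and observe that the corresponding composite of interior automorphisms of $Q$ restricts to a bijection between the fibers. Your explicit verification of the naturality $\rho\circ\phi_y=\phi_{\rho(y)}\circ\rho$ is just the detail the paper leaves implicit.
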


\begin{proof}
Since $P$ is connected we can choose $z_1,\dots,z_d\in P$
and $e_1,\dots,e_d\in\{-1,+1\}$ such that 
$\phi_{z_1}^{e_1}(\phi_{z_2}^{e_2}\dots(\phi_{z_d}^{e_d}(a))=b$.
Since $\rho$ is surjective we can choose $x_1,\dots,x_d\in Q$
such that $\rho(x_j)=z_j$ for all $j$.
Now $\phi_{x_1}^{e_1}\circ\phi_{x_2}^{e_2}\dots\circ\phi_{x_d}^{e_d}$
is a bijection from $\rho^{-1}(a)$ to $\rho^{-1}(b)$.
\end{proof}

\begin{proposition}
\label{dyn}
If $Q$ is connected and $\rho\colon Q\to P$ is a surjective quandle homomorphism,
then there exists a set $S$ and a dynamical cocycle $\alpha$
such that $Q\cong P\times_\alpha S$.
\end{proposition}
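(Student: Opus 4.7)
The plan is to transport the quandle structure on $Q$ along a system of bijections indexed by $P$. By the two preceding propositions, $P$ is connected and all fibers $\rho^{-1}(a)$ have the same cardinality. Pick a set $S$ of that cardinality and, for each $a\in P$, fix a bijection $\sigma_a\colon S\to\rho^{-1}(a)$. Since $\rho$ is a quandle homomorphism, $\sigma_x(s)\star\sigma_y(t)$ lies in $\rho^{-1}(x\star y)$, so we may define
\[
\alpha_{x,y}(s,t)=\sigma_{x\star y}^{-1}\bigl(\sigma_x(s)\star\sigma_y(t)\bigr).
\]

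Next I would verify that $\alpha$ is a dynamical cocycle. The idempotency $\alpha_{x,x}(s,s)=s$ is immediate from $\sigma_x(s)\star\sigma_x(s)=\sigma_x(s)$. The bijectivity of $s\mapsto\alpha_{x,y}(s,t)$ follows from the third quandle axiom: for fixed $b=\sigma_y(t)$, the map $a\mapsto a\star b$ is a bijection on $Q$ (being $\phi_b$), hence restricts to a bijection $\rho^{-1}(x)\to\rho^{-1}(x\star y)$, which transfers via the $\sigma$'s to a bijection on $S$. The self-distributive cocycle identity is obtained by applying $\sigma_{(x\star y)\star z}^{-1}=\sigma_{(x\star z)\star(y\star z)}^{-1}$ to both sides of
\[
(\sigma_x(s)\star\sigma_y(t))\star\sigma_z(u)=(\sigma_x(s)\star\sigma_z(u))\star(\sigma_y(t)\star\sigma_z(u)),
\]
and unfolding the definition of $\alpha$ twice on each side.

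Finally I would exhibit the isomorphism $F\colon P\times_\alpha S\to Q$ by $F(x,s)=\sigma_x(s)$. It is a bijection because the $\sigma_a$ are bijections onto disjoint fibers whose union is $Q$, and it intertwines the two quandle operations by the very definition of $\alpha$:
\[
F((x,s)\star(y,t))=\sigma_{x\star y}(\alpha_{x,y}(s,t))=\sigma_x(s)\star\sigma_y(t)=F(x,s)\star F(y,t).
\]

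The only nontrivial ingredient is having all fibers of $\rho$ equinumerous so that a single indexing set $S$ suffices; this was already secured by the preceding propositions, so no real obstacle remains—the proof is essentially a matter of recording that the structure constants $\alpha_{x,y}$ defined above automatically satisfy the dynamical cocycle axioms because they are the pullback of the quandle operation on $Q$.
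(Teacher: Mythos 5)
Your proposal is correct and is essentially the paper's own argument: the paper likewise fixes bijections $g_p\colon\rho^{-1}(p)\to S$ (the inverses of your $\sigma_p$), defines $\alpha_{x,y}(s,t)=g_{x\star y}(g_x^{-1}(s)\star g_y^{-1}(t))$, and checks that $q\mapsto(\rho(q),g_{\rho(q)}(q))$ is a quandle isomorphism. The only difference is the direction in which the bijections and the isomorphism are written, which is immaterial.
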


\begin{proof}
(See \cite{andrug3}).
Choose a set $S$ and bijections $g_p\colon\rho^{-1}(p)\to S$ for $p\in P$.
Define $\alpha\colon P\times P\times S\times S\to S$ by
$\alpha_{x,y}(s,t)=g_{x\star y}(g_x^{-1}(s)\star g_y^{-1}(t))$.
It is easy to check that $\alpha$ is a dynamical cocycle.
Define $\psi\colon Q\to P\times_\alpha S$ by
$\psi(q)=(\rho(q),g_{\rho(q)}(q))$.
It is easy to check that $\psi$ is a quandle isomorphism.
\end{proof}

A special kind of dynamical cocycles are those that do not depend
on the last entry:
\begin{definition}
(See \cite{andrug3}).
Let $Q$ be a quandle and let $H$ be a group.
A quandle cocycle on $Q$ with values in $H$ 
is a map $\beta\colon Q\times Q\to H$ such that
$\beta(x\star y,z)\beta(x,y)=\beta(x\star z,y\star z)\beta(x,z)$
and $\beta(x,x)=1$ for $x,y,z\in Q$.
\end{definition}

\begin{proposition}
Let $Q$ be a connected quandle 
and let $P$ be its image under $\phi\colon Q\to\intr(Q)$.
Then there exists a set $S$ and a quandle cocycle $\beta$
such that $Q\cong P\times_\beta S$.
\end{proposition}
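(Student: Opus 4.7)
The plan is to apply Proposition~\ref{dyn} to the surjection $\phi\colon Q\to P$ and then observe that the resulting dynamical cocycle $\alpha$ does not depend on its last $S$-argument, so it collapses to a quandle cocycle in the sense just defined.

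First I would check that $\phi\colon Q\to P$ is genuinely a surjective quandle homomorphism onto a subquandle $P\subset\intr(Q)$: the identity $\phi_a\star\phi_b=\phi_b^{-1}\phi_a\phi_b=\phi_{a\star b}$ shows both that $P$ is closed under $\star$ and that $\phi$ preserves the operation. Proposition~\ref{dyn} then furnishes a set $S$, bijections $g_p\colon\phi^{-1}(p)\to S$, and a dynamical cocycle $\alpha_{x,y}(s,t)=g_{x\star y}(g_x^{-1}(s)\star g_y^{-1}(t))$ with $Q\cong P\times_\alpha S$.

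The key observation is that the fiber $\phi^{-1}(y)$ consists, by definition, precisely of the $q\in Q$ with $\phi_q=y$. Hence whenever $q_1,q_2\in\phi^{-1}(y)$, we have $w\star q_1=\phi_{q_1}(w)=\phi_{q_2}(w)=w\star q_2$ for every $w\in Q$. Applying this with $w=g_x^{-1}(s)$ to the formula above shows that $\alpha_{x,y}(s,t)$ is independent of $t$. I can therefore define $\beta(x,y)$ in the symmetric group $H=\mathrm{Sym}(S)$ by $\beta(x,y)(s)=\alpha_{x,y}(s,t)$ for any $t\in S$, and the quandle operation on $P\times_\alpha S$ becomes $(x,s)\star(y,t)=(x\star y,\beta(x,y)(s))$.

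It remains to translate the dynamical cocycle axioms into the two quandle cocycle axioms. The identity $\alpha_{x,x}(s,s)=s$ combined with the $t$-independence just established yields $\beta(x,x)(s)=s$, i.e.\ $\beta(x,x)=1$. The middle axiom of a dynamical cocycle, rewritten under the identification $\alpha_{x,y}(s,t)=\beta(x,y)(s)$, becomes $\beta(x\star y,z)\beta(x,y)(s)=\beta(x\star z,y\star z)\beta(x,z)(s)$, which is exactly the quandle cocycle condition. I do not anticipate any serious obstacle: the entire argument reduces to noting that $\phi$-fibers all act identically on $Q$, which renders the right-hand $S$-variable of the dynamical cocycle redundant.
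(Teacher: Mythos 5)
Your argument is correct, and it supplies a proof where the paper itself gives none: the paper's ``proof'' is only the citation to Proposition 2.11 of Andruskiewitsch--Gra\~na, and what you write is essentially the argument behind that citation. The decisive point is exactly the right one: a fiber $\phi^{-1}(y)$ consists of all elements $q$ with $\phi_q=y$, so any two elements of the same fiber act identically on $Q$, hence in the dynamical cocycle $\alpha_{x,y}(s,t)=g_{x\star y}\bigl(g_x^{-1}(s)\star g_y^{-1}(t)\bigr)$ produced by Proposition~\ref{dyn} the term $g_y^{-1}(t)$ only enters through $\phi_{g_y^{-1}(t)}=y$, making $\alpha$ independent of $t$; the two quandle-cocycle axioms then fall out of the dynamical-cocycle axioms exactly as you say, with $\beta(x,y)\in\mathrm{Sym}(S)$ a legitimate choice of the group $H$ in the paper's definition. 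One cosmetic caveat: your preliminary identity should read $\phi_{a\star b}=\phi_b\,\phi_a\,\phi_b^{-1}$ (from $(a\star b)\star c=(a\star c)\star(b\star c)$ one gets $\phi_c\phi_b=\phi_{b\star c}\phi_c$), not $\phi_b^{-1}\phi_a\phi_b$; this mirrors a convention slip already present in the paper's Example~\ref{ex1} versus its claim that $\phi$ is a homomorphism, and it affects nothing in your argument, since either way $P=\phi(Q)$ is a conjugation subquandle and $\phi$ is a surjective quandle homomorphism, which is all that Proposition~\ref{dyn} and the fiber observation require.
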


\begin{proof}
See \cite{andrug3}, proposition 2.11.
\end{proof}

\begin{example}
\label{eight}
There exists a connected quandle $Q_8$ of order $8$
given by the following table:
\begin{equation*}
\begin{tabular}{|l|l|}
\hline
&$b$\\
\hline
$a$&$a\star b$\\
\hline
\end{tabular}
\qquad
\begin{tabular}{|l|l|l|l|l|l|l|l|l|l|l|l|l|}
\hline
&1&2&3&4&6&6&7&8\\
\hline
1&1&5&7&5&7&1&2&2\\
2&7&2&6&2&6&7&3&3\\
3&4&7&3&7&3&4&1&1\\
4&8&4&1&4&1&8&5&5\\
5&2&8&5&8&5&2&6&6\\
6&6&3&8&3&8&6&4&4\\
7&5&6&4&6&4&5&7&7\\
8&3&1&2&1&2&3&8&8\\
\hline
\end{tabular}
\end{equation*}
This quandle is not faithful:
the image $P$ under $\phi$ is the unique 
Alexander quandle $Q_4$ of order $4$.
Thus $Q_8$ is described by a quandle cocycle on $Q_4$.
Since $Q_8$ is not a product this cocycle is not a 
coboundary, which shows that Lemma 5.1 of \cite{grana3}
does not extend to $Q_4$.
\end{example}

\section{Degree $2$ extensions of Alexander quandles.}

The purpose of this section is to show that a
dynamical cocycle on a connected Alexander quandle 
with values in a set $S$ of cardinality $2$
is almost always a quandle cocycle.
For this purpose we identify $S$ with the field $\FF$ of two elements.

\begin{proposition}
The formula $\alpha_{x,y}(s,t)=\beta(x,y)+s+\gamma(x,y)t$
defines an equivalence between dynamical cocycles 
$\alpha\colon Q\times Q\times \FF\times\FF\to\FF$ 
and pairs of maps $\beta,\gamma\colon Q\times Q\to \FF$
satisfying the following conditions:
\begin{itemize}
\item
$\beta(x\star y,z)+\beta(x,y)=\beta(x,z)+\beta(x\star z,y\star z)+\gamma(x\star z,y\star z)\beta(y,z)$
\item
$\gamma(x,y)=\gamma(x\star z,y\star z)$.
\item
$\gamma(x\star y,z)=\gamma(x,z)+\gamma(x\star z,y\star z)\gamma(y,z)$.
\end{itemize}
\end{proposition}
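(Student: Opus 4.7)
The plan is to determine the general shape of $\alpha_{x,y}$ as a function $\FF^2 \to \FF$ from the axioms of a dynamical cocycle, then expand both sides of the main identity and match coefficients. Since $\FF$ has only two elements, every function $\FF \times \FF \to \FF$ has a unique polynomial expression
$$\alpha_{x,y}(s,t) = a(x,y) + b(x,y)\,s + c(x,y)\,t + d(x,y)\,st,$$
with $a,b,c,d\colon Q\times Q\to\FF$. The bijectivity requirement, applied separately at $t=0$ and $t=1$, forces $b(x,y)=1$ and $b(x,y)+d(x,y)=1$, so $b\equiv 1$ and $d\equiv 0$. Setting $\beta=a$ and $\gamma=c$ then yields the displayed form for $\alpha_{x,y}$.

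Next I would substitute this form into the second dynamical cocycle axiom and expand both sides as affine functions of $s,t,u\in\FF$. The left-hand side $\alpha_{x\star y,z}(\alpha_{x,y}(s,t),u)$ unfolds to
$$\beta(x\star y,z)+\beta(x,y)+s+\gamma(x,y)\,t+\gamma(x\star y,z)\,u,$$
while the right-hand side $\alpha_{x\star z,y\star z}(\alpha_{x,z}(s,u),\alpha_{y,z}(t,u))$ unfolds to
$$\beta(x\star z,y\star z)+\beta(x,z)+\gamma(x\star z,y\star z)\beta(y,z)+s+\gamma(x\star z,y\star z)\,t+\bigl[\gamma(x,z)+\gamma(x\star z,y\star z)\gamma(y,z)\bigr]u.$$
Since both sides are affine in $(s,t,u)$, agreement is equivalent to agreement of each of the four coefficients. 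The coefficient of $s$ gives the trivial identity $1=1$; the coefficients of $t$, of $u$, and the constant term reproduce precisely the second, third, and first conditions of the proposition, in that order.

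The whole computation is routine bookkeeping; the one step requiring care is the expansion of the product $\gamma(x\star z,y\star z)\cdot\alpha_{y,z}(t,u)$ on the right, which contributes simultaneously to the $t$-coefficient and to the $u$-coefficient. The reverse direction is then immediate: given $(\beta,\gamma)$ satisfying the three identities, the formula defines a map $\alpha$ that is manifestly bijective in $s$ (the coefficient of $s$ is $1$) and, by reversing the coefficient-matching, satisfies the second axiom; the remaining axiom $\alpha_{x,x}(s,s)=s$ is the normalization $\beta(x,x)=\gamma(x,x)=0$, which one should read as an implicit part of the correspondence.
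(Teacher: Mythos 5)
Your proof is correct and takes essentially the same route as the paper: the paper observes that bijectivity of $s\mapsto A(s,t)$ forces the form $A=b+s+ct$ and then declares the translation of the dynamical cocycle axiom into the three conditions ``straightforward,'' which is exactly the coefficient-matching you carry out explicitly. Your closing remark that the normalization $\beta(x,x)=\gamma(x,x)=0$ (coming from $\alpha_{x,x}(s,s)=s$) must be read as an implicit part of the correspondence is also consistent with the paper, which leaves that point unstated.
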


\begin{proof}
A map $A\colon\FF\times\FF\to\FF$ such that
$s\mapsto A(s,t)$ is bijective for all $t$ is necessarily of the form $A=b+s+ct$.
The translation of the dynamical cocycle conditions on $\alpha$
to conditions on $\beta$ and $\gamma$ is straightforward.
\end{proof}

\begin{proposition}
Let $Q$ be the connected Alexander quandle
associated to the abelian group $M$ and the automorphism $T$.
Then there exists a map $\mu\colon M\to\FF$
such that $\gamma(x,y)=\mu(x-y)$.
It has the following properties:
\begin{itemize}
\item
$\mu(Tx)=\mu(x)$.
\item
$\mu(x+y)+\mu(x)\mu(y)=\mu(Tx+y)$.
\end{itemize}
\end{proposition}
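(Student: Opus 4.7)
The plan is to build $\mu$ directly from $\gamma$ by fixing one coordinate at $0$, and then derive the two stated identities from the three identities for $\gamma$ supplied by the previous proposition, using the explicit Alexander formula $a\star b=Ta+(1-T)b$.

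First I would argue that $\gamma$ is translation invariant on $M\times M$. The identity $\gamma(x,y)=\gamma(x\star z,y\star z)$ says that $\gamma$ is invariant under the diagonal action of every $\phi_z$, and hence under the group they generate. A direct computation with the Alexander formula gives
\[
\phi_z\phi_w^{-1}(x)=x+(1-T)(z-w).
\]
Because $Q$ is connected, $1-T$ is invertible, so the elements $(1-T)(z-w)$ run through all of $M$. Consequently $\gamma(x+c,y+c)=\gamma(x,y)$ for every $c\in M$, and defining $\mu\colon M\to\FF$ by $\mu(u)=\gamma(u,0)$ yields $\gamma(x,y)=\mu(x-y)$.

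Next I would extract the two properties of $\mu$ by specialising the remaining $\gamma$-identities to $z=0$ (so that $x\star 0=Tx$). Applied to $\gamma(x,y)=\gamma(x\star z,y\star z)$ with $y=z=0$, this gives $\mu(x)=\gamma(x,0)=\gamma(Tx,0)=\mu(Tx)$, which is the first property. For the second, specialising the identity
\[
\gamma(x\star y,z)=\gamma(x,z)+\gamma(x\star z,y\star z)\gamma(y,z)
\]
to $z=0$ and translating into $\mu$ via $x\star y=Tx+(1-T)y$ yields
\[
\mu(Tx+(1-T)y)=\mu(x)+\mu(T(x-y))\,\mu(y)=\mu(x)+\mu(x-y)\,\mu(y),
\]
where I used $\mu\circ T=\mu$. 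Re-parametrising by $u=x-y$, $v=y$ gives exactly the desired identity $\mu(Tu+v)=\mu(u+v)+\mu(u)\mu(v)$.

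No step is truly hard, but the one requiring most care is the first: one has to exploit connectedness (that is, invertibility of $1-T$) together with the $\phi_z$-invariance of $\gamma$ to deduce full translation invariance, since the raw identity only supplies invariance under the single-argument affine maps $\phi_z$ rather than under translations by arbitrary elements of $M$.
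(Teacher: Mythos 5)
Your proof is correct and follows essentially the same route as the paper: define $\mu(u)=\gamma(u,0)$ and extract both properties from the second and third $\gamma$-identities, using connectedness (invertibility of $1-T$) at the key point. The only cosmetic differences are that you deduce $\gamma(x,y)=\mu(x-y)$ from full translation invariance via the compositions $\phi_z\phi_w^{-1}$, whereas the paper gets it in one stroke with the substitution $z=-(1-T)^{-1}Ty$, and you specialize the third identity at $z=0$ (with a re-parametrisation) rather than at $y=0$.
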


\begin{proof}
The second condition on $\beta,\gamma$ now reads
\begin{equation*}
\gamma(x,y)=\gamma(Tx+z-Tz,Ty+z-Tz)
\end{equation*}
If one substitutes $z=0$ one finds $\gamma(Tx,Ty)=\gamma(x,y)$.
If one substitutes $z=-(1-T)^{-1}Ty$ one finds $\gamma(x,y)=\gamma(Tx-Ty,0)=\gamma(x-y,0)$.
Thus one can put $\mu(x)=\gamma(x,0)$.
The first property of $\mu$ is now immediate.
From the third condition on $\beta,\gamma$ putting $y=0$ one gets
\begin{equation*}
\mu(Tx-z)=\mu(x-z)+\mu(Tx)\mu(-z)
\end{equation*}
Using $\mu(Tx)=\mu(x)$ this proves the second property of $\mu$.
\end{proof}

\begin{proposition}
If $M$ is finite then the  set $N=\{x\in M\;;\;\mu(x)=0\}$ is closed under addition and under $T$.
\end{proposition}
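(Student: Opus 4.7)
Closure under $T$ is immediate: the identity $\mu(Tx)=\mu(x)$ shows that $x\in N$ implies $Tx\in N$. The substantive content is closure under addition.

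The strategy is to show that every $n\in N$ acts on $M$ as a $\mu$-preserving translation, i.e.\ $\mu(z+n)=\mu(z)$ for all $z\in M$; given this, taking $z=n_1$ and $n=n_2$ yields $\mu(n_1+n_2)=\mu(n_1)=0$, hence $n_1+n_2\in N$. The key calculation is a reparametrization of the functional equation: substituting $y=z-x$ into $\mu(x+y)+\mu(x)\mu(y)=\mu(Tx+y)$ rewrites it as
\begin{equation*}
\mu(z+(T-1)x) = \mu(z)+\mu(x)\mu(z-x).
\end{equation*}
For $x\in N$ the right-hand side collapses to $\mu(z)$, so every element of $(T-1)N$ is already a period of $\mu$.

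What remains is to upgrade this from $(T-1)N$ to all of $N$. First I would record the identity $\mu(0)=0$: because $\gamma(x,y)=\mu(x-y)$ depends only on $x-y$, the value $\gamma(x,x)=\mu(0)$ is a single constant, and the dynamical cocycle axiom $\alpha_{x,x}(s,s)=s$ applied to the parametrization $\alpha_{x,y}(s,t)=\beta(x,y)+s+\gamma(x,y)t$ forces $\gamma(x,x)=0$ by comparing the coefficient of $s$. Plugging $y=-x$ into the functional equation then gives $\mu((T-1)x)=\mu(x)\mu(-x)$, so $(T-1)N\subseteq N$. Since $Q$ is a connected Alexander quandle, $1-T$ is invertible on $M$, and so $T-1$ is injective on the finite set $N$, hence bijective on it. Therefore every $n\in N$ has the form $(T-1)x$ with $x\in N$, which makes $n$ a period of $\mu$ and finishes the argument.

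The only mildly subtle step is this last upgrade from $(T-1)N\subseteq N$ to $(T-1)N=N$; it is precisely here that the finiteness hypothesis in the proposition is used.
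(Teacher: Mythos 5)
Your proof is correct and follows essentially the same route as the paper: the substitution $y=z-x$ gives $\mu(z+(T-1)x)=\mu(z)+\mu(x)\mu(z-x)$, the substitution $y=-x$ (after $\mu(0)=0$) gives $(T-1)N\subseteq N$, and finiteness plus invertibility of $1-T$ upgrades this to a bijection of $N$, yielding closure under addition. The only cosmetic difference is that you obtain $\mu(0)=0$ from the cocycle axiom $\alpha_{x,x}(s,s)=s$, whereas the paper gets it by setting $x=y=0$ in the functional equation.
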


\begin{proof}
Putting $x=y=0$ in the second property of $\mu$ we find $\mu(0)^2=0$, so $\mu(0)=0$.
Now putting $y=-x$ in the second property of $\mu$ we find $\mu(x)\mu(-x)=\mu(Tx-x)$.
Thus $T-1$ maps $N$ to $N$.
Since $N$ is assumed to be finite $T-1$ is in fact a bijection from $N$ to $N$.

Putting $y=z-x$ in the second property of $\mu$ we find 
$\mu(z)+\mu(x)\mu(z-x)=\mu(Tx-x+z)$.
Thus if $x,z\in N$ then $Tx-x+z\in N$.
Writing $w=Tx-x$ and using the fact that $T-1$ is invertible on $N$
this says that $w+z\in N$ if $w,z\in N$.
\end{proof}

\begin{proposition}
\label{tplus1not0}
If $M$ is finite and  $1+T$ and $2$ are invertible on $M$ then $N=M$.
\end{proposition}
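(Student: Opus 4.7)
The plan is to exploit the symmetry of the second identity satisfied by $\mu$ under interchange of its two arguments. Writing $\mu(x+y)+\mu(x)\mu(y)=\mu(Tx+y)$ once for $(x,y)$ and once for $(y,x)$, and using commutativity of addition in $M$ and of multiplication in $\FF$, the two left-hand sides coincide, so I would obtain
\[
\mu(Tx+y)=\mu(Ty+x)\qquad\text{for all }x,y\in M.
\]

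Next I would show that as $(x,y)$ varies over $M\times M$, the pair $(Tx+y,\,Ty+x)$ ranges over all of $M\times M$. This amounts to invertibility of the operator matrix $\bigl(\begin{smallmatrix}T&1\\1&T\end{smallmatrix}\bigr)$ on $M\oplus M$, and by the classical adjugate formula it suffices that the ``determinant'' $T^{2}-1=(T-1)(T+1)$ be invertible on $M$. This in turn holds because $1-T$ is invertible by connectedness of the Alexander quandle and $1+T$ is invertible by hypothesis; explicitly, the unique solution of $Tx+y=a$, $x+Ty=b$ is $x=(T^{2}-1)^{-1}(Ta-b)$, $y=(T^{2}-1)^{-1}(Tb-a)$.

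Combining the two steps, one concludes that $\mu(a)=\mu(b)$ for all $a,b\in M$. Specializing $b=0$ and recalling from the previous proposition that $\mu(0)=0$ then yields $\mu\equiv 0$, i.e.\ $N=M$. The only step requiring any care is the operator-theoretic invertibility of the $2\times 2$ matrix above, which is handled by the adjugate identity as indicated; the rest is formal.
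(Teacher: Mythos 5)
Your proof is correct, but it is a genuinely different argument from the one in the paper. You exploit the symmetry of $\mu(x+y)+\mu(x)\mu(y)$ in $x,y$ to get $\mu(Tx+y)=\mu(Ty+x)$, and then observe that the map $(x,y)\mapsto(Tx+y,\,x+Ty)$ is onto $M\oplus M$ because its ``determinant'' $T^2-1=(T-1)(T+1)$ is invertible (the entries of the $2\times2$ operator matrix commute, so the adjugate argument is legitimate, and your explicit formulas $x=(T^2-1)^{-1}(Ta-b)$, $y=(T^2-1)^{-1}(Tb-a)$ check directly); hence $\mu$ is constant, equal to $\mu(0)=0$, so $N=M$. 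The paper instead works with the set $N$: using the preceding proposition that $N$ is closed under addition, the invertibility of $2$ and finiteness to see that doubling is a bijection of $N$, it shows by a two-case computation that $(1+T)x\in N$ for every $x\in M$, and then invokes invertibility of $1+T$. What your route buys is economy of hypotheses: you never use the invertibility of $2$, the finiteness of $M$, or the closure properties of $N$ beyond the trivial fact $\mu(0)=0$ (which is a one-line substitution $x=y=0$), so you actually prove the stronger statement that $1\pm T$ invertible alone forces $N=M$; this is consistent with the paper, where the case $2$ not invertible shows up only through the $Q_4$-based example, which is indeed governed by a quandle cocycle ($\gamma=0$). What the paper's route buys is that its intermediate facts about $N$ set up the companion case $T=-1$ (Proposition \ref{tplus1is0}), which your symmetry argument cannot reach since there $1+T=0$.
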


\begin{proof}
Since $N$ is closed under addition, 
multiplication by $2$ maps $N$ to $N$.
Moreover it is injective and thus bijective.
Consider first the case that $x\not\in N$.
Then $2x\not\in N$,
and so $\mu(2x)=1=\mu(x)$.
Therefore $\mu(Tx+x)=\mu(2x)+\mu(x)^2=1+1^2=0$;
this means that $Tx+x\in N$.
Now consider the case that $x\in N$.
Then clearly also $Tx+x\in N$.
Therefore $T+1$ maps $M$ to $N$;
and since $T+1$ is invertible this means that $N=M$.
\end{proof}

\begin{proposition}
\label{tplus1is0}
If $M$ is finite and if $2$ and $3$ are invertible on $M$ and $T=-1$ then $N=M$.
\end{proposition}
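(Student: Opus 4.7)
The plan is to exploit the drastic simplification of the second property of $\mu$ when $T = -1$ and deduce $\mu(3x) = 0$ for every $x \in M$; since $3$ is invertible, this immediately gives $M = 3M \subseteq N$, hence $N = M$. No appeal to the additive closure of $N$ proved in the preceding proposition is needed beyond the basepoint identity $\mu(0) = 0$.

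Concretely, with $T = -1$ the second property of $\mu$ reads $\mu(x+y) + \mu(x)\mu(y) = \mu(y-x)$. First I would set $y = x$, which together with $\mu(0) = 0$ yields $\mu(2x) + \mu(x)^2 = 0$, and hence $\mu(2x) = \mu(x)^2 = \mu(x)$ since $\mu$ takes values in the field $\FF$ of two elements. Next I would substitute $y = 2x$ into the same identity: the right hand side becomes $\mu(2x - x) = \mu(x)$, while the left hand side becomes $\mu(3x) + \mu(x)\mu(2x) = \mu(3x) + \mu(x)^2 = \mu(3x) + \mu(x)$. Cancelling $\mu(x)$ gives $\mu(3x) = 0$, valid for every $x \in M$.

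There is no real obstacle here once one sees that the substitution $y = 2x$ simultaneously produces $\mu(x)$ on the right and isolates $\mu(3x)$ on the left after cancellation. It is worth remarking that the argument itself uses only the invertibility of $3$ on $M$; the invertibility of $2$ enters the hypothesis merely through the background requirement that the underlying Alexander quandle, with operation $a \star b = -a + 2b$, be connected.
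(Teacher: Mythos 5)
Your proof is correct, and it takes a genuinely different and more economical route than the paper. The paper passes to the quotient $M/N$: it uses the finiteness of $M$ and the previously established additive closure of $N$ to see that $\mu$ descends to $M/N$, then uses the functional equation to show any two nonzero classes whose sum is nonzero must coincide, so $M/N$ has at most three elements, and finally uses invertibility of $2$ and $3$ to manufacture four distinct classes from any nontrivial one, a contradiction. You instead stay at the level of $\mu$ itself: from $y=x$ you get $\mu(2x)=\mu(x)^2=\mu(x)$ (using only $\mu(0)=0$), and from $y=2x$ you get $\mu(3x)+\mu(x)\mu(2x)=\mu(x)$, hence $\mu(3x)=0$ for all $x$, and surjectivity of multiplication by $3$ gives $N=M$. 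Both substitutions are valid instances of $\mu(x+y)+\mu(x)\mu(y)=\mu(-x+y)$, so there is no gap. What your argument buys: it bypasses the preceding proposition about $N$ entirely (no additive closure needed), it does not use finiteness of $M$, and, as you correctly note, it uses invertibility of $2$ only through the standing hypothesis that the Alexander quandle with $T=-1$ is connected (i.e.\ $1-T=2$ invertible), which is already needed to define $\mu$ in the first place; the computation proper needs only that multiplication by $3$ is surjective on $M$.
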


\begin{proof}
For $T=-1$ the second property of $\mu$ reads
\begin{equation*}
\mu(x+y)+\mu(x)\mu(y)=\mu(-x+y)
\end{equation*}
Therefore $\mu(x+y)=\mu(-x+y)$ if $x\in N$.
Writing $w=-x+y$ this yields $\mu(w+2x)=\mu(w)$ for $x\in N$.
Since $2$ is invertible on $N$ this means that $\mu$ is well defined on $M/N$.

Now suppose that $\xi$ and $\eta$ are elements of $M/N$ 
such that $\xi$ and $\eta$ and $\xi+\eta$ are nonzero.
Then $\mu(\eta-\xi)=\mu(\xi+\eta)+\mu(\xi)\mu(\eta)=1+1\cdot1=0$ 
which means that $\eta=\xi $ in $M/N$.
Therefore if $\xi$ and $\eta$ and $\zeta$ and the zero class $N$
would be four different elements of $M/N$ 
then we would have $\xi+\eta=\xi+\zeta=\eta+\zeta=N$
which is a contradiction.
This means that $M/N$ has at most three elements.

Now suppose that $\xi$ is a nontrivial element of $M/N$.
Then so are $2\xi$ and $3\xi$ since $2$ and $3$ are invertible.
Together with $N$ this gives four different elements of $M/N$.
Contradiction.
\end{proof}

\section{Simple quandles.}

\begin{definition}
A quandle $Q$ is called simple if the
only surjective quandle homomorphisms on $Q$ 
have trivial image or are bijective.
\end{definition}

\begin{example}
A connected quandle $P$ of prime order $p$ is simple.
This is because the cardinality of $\phi(P)$ must divide $p$.
\end{example}

\begin{proposition}
\label{zg}
Let $P$ be a simple connected quandle  and let $G=\intr(P)$.
Then $\phi\colon P\to G$ is injective,
its image $C$ generates $G$ and is a conjugacy class,
and the center $Z(G)$ is trivial.
\end{proposition}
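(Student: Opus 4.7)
The plan is to prove the four claims in order, all leveraging the basic identity $h \phi_a h^{-1} = \phi_{h(a)}$ valid for any quandle automorphism $h$ of $P$ and any $a \in P$; this follows from the one-line computation $(h\phi_a h^{-1})(x) = h(h^{-1}(x)\star a) = x \star h(a)$, using that $h$ respects $\star$.

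First I would extract injectivity of $\phi$ from simplicity. The map $\phi \colon P \to C$ with $C = \phi(P)$ is a surjective quandle homomorphism onto the subquandle $C$ of $\intr(P)$, so by simplicity either $|C|=1$ or $\phi$ is bijective. In the degenerate case $|C|=1$, all $\phi_a$ coincide, so $G$ is cyclic and in particular abelian; Proposition \ref{abel} then forces the quandle structure on $P$ to be trivial ($a \star b = a$), and connectedness of a trivial quandle requires $|P| = 1$, making the whole proposition vacuous. Otherwise $\phi$ is bijective onto $C$, hence injective.

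The claim that $C$ generates $G$ is immediate from the very definition of $\intr(P)$ as the subgroup of $\aut(P)$ generated by all $\phi_a$. For the conjugacy class statement, the identity above shows that $G$ permutes $C$ by conjugation (so $C$ is closed under $G$-conjugation), while connectedness of $P$ supplies, for any $a, b \in P$, some $h \in G$ with $h(a) = b$, whence $h \phi_a h^{-1} = \phi_b$; thus the elements of $C$ form a single conjugacy class of $G$.

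Finally, for $Z(G) = 1$, I would take $z \in Z(G)$: commuting with each $\phi_a$ gives $\phi_{z(a)} = z\phi_a z^{-1} = \phi_a$ for every $a \in P$, and the injectivity of $\phi$ established in the first step forces $z(a) = a$ for all $a$, so $z$ acts as the identity on $P$ and is therefore trivial in $G \subseteq \aut(P)$. The whole argument is a chain of short steps built on one conjugation identity; the only genuinely nontrivial move is recognizing that simplicity, applied to the quandle quotient $\phi \colon P \to \phi(P)$, is precisely the hypothesis needed to force injectivity of $\phi$, after which the remaining three assertions follow almost formally.
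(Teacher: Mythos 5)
Your proposal is correct and takes essentially the same route as the paper: simplicity applied to $\phi\colon P\to\phi(P)$ forces injectivity (the trivial-image case being excluded via connectedness, which you route through Proposition~\ref{abel} while the paper derives $b\star a=b$ directly), and the identity $h\phi_a h^{-1}=\phi_{h(a)}$ combined with transitivity of $\intr(P)$ gives both the conjugacy-class claim and $Z(G)=1$, exactly as in the paper's proof where the same conjugation step is written out as a word in the generators $\phi_c^{\pm1}$. The only nitpick is that the one-point case you isolate makes the proposition trivially true rather than ``vacuous,'' and your treatment of it is, if anything, slightly more careful than the paper's.
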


\begin{proof}
The map $\phi\colon P\to C$ is a surjective quandle homomorphism
so is trivial or bijective.
In the first case we would have $b\star a=\phi_a(b)=b$ for all $a,b\in P$,
which contradicts connectedness.
So we can identify $P$ with its image $C$.

If $a,b\in C$ then by connectedness there are $c_1,\dots,c_d\in C$
and $e_1,\dots,e_d\in\{+1,-1\}$ such that 
$b=c_d^{-e_d}\dots c_1^{-e_1}ac_1^{e_1}\dots c_d^{e_d}$
which means that $a$ and $b$ are in the same conjugacy class of $G$.
On the other hand suppose $b\in G$ is in the same conjugacy class as $a\in C$, say $b=a*g$ 
Then we can write $g=c_1^{e_1}\dots c_d^{e_d}$
for some $c_i\in C$ and $e_i\in\{+1,-1\}$ and we have
$b=c_d^{-e_d}\dots c_1^{-e_1}ac_1^{e_1}\dots c_d^{e_d}\in C$.
So $C$ is a conjugacy class.

If $\psi\in G=\intr(P)$ commutes with all elements of $G$ then it commutes with all $\phi_b$,
which means that $\phi_b=\psi\phi_b\psi^{-1}=\phi_{\psi(b)}$.
Since $\phi$ is injective that means that $b=\psi(b)$ for all $b$, so $\psi=1$.
\end{proof}

\begin{theorem}
If $Q$ is a connected quandle of order $2p$ with $p$ prime
then $Q$ is simple unless $p=2$ or $p=3$.
\end{theorem}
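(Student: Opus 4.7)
The plan is to suppose that $Q$ is non-simple and derive a contradiction whenever $p\ne 2,3$. Non-simplicity supplies a surjective quandle homomorphism $\rho\colon Q\to P$ that is neither trivial nor bijective; by the earlier proposition $|P|$ divides $2p$, so $|P|\in\{2,p\}$. A direct check disposes of $|P|=2$: in any two-element quandle $\{a,b\}$ the map $\phi_a$ fixes $a$ and must be a bijection, hence equals the identity, so $b\star a=b$; similarly $a\star b=a$, making the quandle trivial and hence not connected. Thus $|P|=p$. Since $P$ is connected of prime order it is simple (by the example at the start of Section~3), and, appealing to the known classification of connected quandles of prime order, $P$ is an Alexander quandle on $M=\ZZ/p$ associated with some $T\in(\ZZ/p)^{*}$ with $T\ne 1$.

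Next I would apply Proposition \ref{dyn} to write $Q\cong P\times_\alpha S$ with $|S|=2$, identify $S$ with $\FF$, and use the first proposition of Section~2 to decompose $\alpha_{x,y}(s,t)=\beta(x,y)+s+\gamma(x,y)t$; the following proposition then writes $\gamma(x,y)=\mu(x-y)$ for a suitable $\mu\colon M\to\FF$. Since $p\ne 2$ the element $2$ is invertible on $\ZZ/p$. If $T\ne -1$ then $1+T$ is nonzero, and hence invertible, in $\ZZ/p$, so Proposition \ref{tplus1not0} gives $N=M$; if $T=-1$ then $3$ is invertible (as $p\ne 3$) and Proposition \ref{tplus1is0} gives the same conclusion. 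In either case $\mu\equiv 0$, so $\gamma\equiv 0$, the operation simplifies to $(x,s)\star(y,t)=(x\star y,\beta(x,y)+s)$, and the first condition on $(\beta,\gamma)$ reduces to the standard quandle $2$-cocycle condition on $\beta$.

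So $Q$ has been realised as an abelian extension $P\times_\beta \FF$ of the Alexander quandle $P$. The main obstacle is the remaining step: to show that $\beta$ must be a quandle coboundary whenever $p\ne 2,3$. Once this is granted, the substitution $(x,s)\mapsto(x,s+h(x))$ with $\beta(x,y)=h(x\star y)-h(x)$ transforms $Q$ into the direct product $P\times\FF$, whose subsets $P\times\{0\}$ and $P\times\{1\}$ are distinct $\intr(Q)$-orbits, contradicting the connectedness of $Q$. I expect the coboundary property to follow from a further $\mu$-style analysis of $\beta$ paralleling Propositions \ref{tplus1not0}--\ref{tplus1is0}, exploiting that $\ZZ/p$ is a field and the Alexander relations $x\star y=Tx+(1-T)y$, with the familiar arithmetic obstructions at $p\in\{2,3\}$ now arising from $\beta$ rather than from $\gamma$.
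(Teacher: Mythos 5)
Your argument tracks the paper's proof almost step for step: non-simplicity gives $\rho\colon Q\to P$ with $|P|=p$ (the paper likewise dismisses $|P|=2$ by connectedness), $P$ is identified as the Alexander quandle on $\ZZ/(p)$ via the prime-order classification (the paper gets this from proposition \ref{zg} together with Theorem 4.1 of \cite{etingofgs}), and then propositions \ref{dyn}, \ref{tplus1not0} and \ref{tplus1is0} force $\gamma=0$, so that $Q\cong P\times_\beta\FF$ for a quandle cocycle $\beta$. Up to that point you are correct and in line with the paper.

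However, the step you yourself flag as ``the main obstacle'' --- that the quandle cocycle $\beta$ on the prime-order Alexander quandle with values in $\FF$ is a coboundary --- is left entirely unproved, and it is not a routine verification that can be deferred: without it no contradiction with connectedness arises, since a non-coboundary cocycle can produce a connected extension. The paper's own example \ref{eight} makes this concrete: $Q_8$ is a connected degree-$2$ cocycle extension of the Alexander quandle $Q_4$ whose cocycle is \emph{not} a coboundary, so the coboundary property is a genuine theorem about $\ZZ/(p)$, not a formality. The paper closes this gap by invoking Lemma 5.1 of \cite{grana3}; you would need either to cite that lemma or to reprove it (e.g.\ by an averaging/summation argument over $\ZZ/(p)$, using that $|\FF|=2$ is coprime to $p$). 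Your closing guess that the arithmetic obstructions at $p\in\{2,3\}$ would resurface in the analysis of $\beta$ is also off the mark: those primes are already excluded at the $\gamma$-stage, and the coboundary statement itself requires no such restriction --- what it does require is exactly the prime-order hypothesis that fails for $Q_4$.
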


\begin{proof}
Suppose that $\rho\colon Q\to P$ is a nontrivial surjective quandle homomorphism.
Then the cardinality of $P$ divides that of $Q$ so it equals $2$ or $p$.
Since $P$ is connected its cardinality can not be $2$,
so it must be $p$.
Therefore $Q\cong P\times_\alpha S$ for some dynamical cocycle $\alpha$
by proposition \ref{dyn}.

By proposition \ref{zg} the conditions of theorem 4.1 of \cite{etingofgs} are satisfied.
Therefore $\intr(P)$ is a subgroup of the affine group of $M=\ZZ/(p)$
which is another way of saying that $P$ is an Alexander quandle 
associated to $M$ and some $T$.
The automorphism $T$ is multiplication by some $w\in\ZZ/(p)$ with $w\not=0,1$.

Assume that  $p\not=2,3$.
If $w\not=-1$ then proposition \ref{tplus1not0} applies,
and if $w=-1$ then proposition \ref{tplus1is0} applies.
In both cases $N=M$ which means that $\mu=0$ 
and therefore $\gamma=0$.
Therefore $\alpha=\beta$, which is quandle cocycle.
By Lemma 5.1.  of \cite{grana3} this cocycle is a coboundary.
In other words $\beta$ can be assumed to vanish, 
and $Q\cong P\times S$.
This contradicts the connectedness of $Q$.
\end{proof}

\begin{example}
\label{six}
The $2$-cycles in $\CALS_4$ form a connected quandle $Q_{6,2}$ of order $6$.
The $4$-cycles in $\CALS_4$ form another connected quandle $Q_{6,4}$ of order $6$.
Both have a surjective homomorphism to the Alexander quandle $Q_3$
of order $3$, hence are not simple.
\end{example}

\begin{definition}
A group $G$ is called image-cyclic if every 
surjective homomorphism on $G$ has cyclic image or is bijective.
\end{definition}

\begin{proposition}
\label{imcyc}
Let $Q$ be a connected quandle.
Then $Q$ is simple if and only if $Q$ is faithful and $G=\intr(Q)$
is image-cyclic.
\end{proposition}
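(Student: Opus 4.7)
The plan is to prove each direction by translating surjective quandle homomorphisms into surjective group homomorphisms between interior automorphism groups (and vice versa), combining Proposition~\ref{zg} in one direction with Proposition~\ref{abel} in the other.

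For the forward direction, assume $Q$ is simple. Proposition~\ref{zg} immediately gives that $\phi\colon Q\to G$ is injective (so $Q$ is faithful), that $C=\phi(Q)$ is a conjugacy class generating $G$, and that $Z(G)=1$. To verify image-cyclicity, let $f\colon G\to H$ be a surjective group homomorphism. Viewing groups as quandles under conjugation, $f\circ\phi\colon Q\to f(C)$ is a surjective quandle homomorphism onto the subquandle $f(C)\subseteq H$, so by simplicity of $Q$ its image has one point or $f\circ\phi$ is bijective. In the first case $f(C)$ is a single element; since $C$ generates $G$, its image generates $H$, making $H$ cyclic. In the second case $f$ is injective on $C$, and for any $g\in\ker f$ and $c\in C$ we have $gcg^{-1}\in C$ with $f(gcg^{-1})=f(c)$, hence $gcg^{-1}=c$; since $C$ generates $G$ this places $g\in Z(G)=1$, so $f$ is a bijection.

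For the backward direction, assume $Q$ is faithful and $G$ is image-cyclic, and let $\rho\colon Q\to P$ be a surjective quandle homomorphism. The key step is to construct an induced surjective group homomorphism $\bar\rho\colon G\to\intr(P)$ sending $\phi_q\mapsto\phi_{\rho(q)}$. The computation $\rho(\phi_q(q'))=\rho(q'\star q)=\rho(q')\star\rho(q)=\phi_{\rho(q)}(\rho(q'))$ shows that each $\phi_q$ descends through $\rho$ to $\phi_{\rho(q)}$ on $P$, hence every element of $\intr(Q)$ descends to a well-defined element of $\intr(P)$, giving the desired homomorphism, which is surjective because generators map to generators. Applying image-cyclicity to $\bar\rho$ yields two cases: if $\intr(P)$ is cyclic then it is abelian, and since $P$ is connected (as a surjective image of the connected quandle $Q$) Proposition~\ref{abel} forces $P$ to consist of a single point; if instead $\bar\rho$ is bijective, then $\rho(q)=\rho(q')$ entails $\bar\rho(\phi_q)=\phi_{\rho(q)}=\phi_{\rho(q')}=\bar\rho(\phi_{q'})$, whence $\phi_q=\phi_{q'}$, and faithfulness of $Q$ gives $q=q'$, so $\rho$ is a bijection.

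The main obstacle is setting up $\bar\rho$ cleanly: one must check that any word in the generators $\phi_{q_i}^{\pm1}$ equal to the identity of $\intr(Q)$ also yields the identity of $\intr(P)$ upon replacing each $q_i$ by $\rho(q_i)$, which is exactly the fiber-preservation calculation above extended to products. Once $\bar\rho$ is in hand, both directions proceed by the same pattern: the trivial-image case of simplicity matches the cyclic-image case of image-cyclicity (through Proposition~\ref{abel}), while the bijective case passes between the two through the faithfulness hypothesis.
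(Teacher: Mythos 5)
Your proof is correct and follows essentially the same route as the paper: the forward direction is Proposition~\ref{zg} plus the identical kernel/conjugacy-class argument, and the backward direction uses the induced group homomorphism $\intr(Q)\to\intr(P)$ together with Proposition~\ref{abel}. The only differences are cosmetic: where the paper cites Eisermann (Prop.~2.31) for the existence of the induced homomorphism you sketch its well-definedness directly, and you bypass the paper's intermediate remark that $P$ is faithful by deducing injectivity of $\rho$ straight from injectivity of $\bar\rho$ and faithfulness of $Q$.
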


\begin{proof}
See \cite{andrug3}.
Let $Q$ be simple, so that proposition \ref{zg} applies.
Let $\psi\colon G\to H$ be a surjective group homomorphism.
Then its restriction $\rho\colon C\to\psi(C)$ is a surjective quandle homomorphism.
Therefore $\rho$ is bijective or trivial.
In the first case if $g\in G$ is in the kernel of $\psi$ then 
for $c\in C$ one has $\rho(g^{-1}cg)=\psi(g)^{-1}\psi(c)\psi(g)=\psi(c)=\rho(c)$
and therefore $g^{-1}cg=c$.
Since $C$ generates $G$ this means that $g\in Z(G)=\{1\}$,
so $\psi$ is bijective.
In the second case $\psi(C)$ is a point,
and this point generates $\psi(G)$ 
which is therefore cyclic.

Now assume that $Q$ is faithful and image-cyclic.
Suppose that $\rho\colon Q\to P$ is a surjective quandle homomorphism.
Then $\rho$ induces a surjective group homomorphism
$\psi\colon G=\intr(Q)\to H=\intr(P)$
by proposition 2.31 of \cite{eisermann5}.
It follows easily that $P$ is also faithful,
so that $\rho$ can be seen as restriction of $\psi$.
By hypothesis $\psi$ is bijective or $H$ is cyclic.
In the first case $\rho$ is injective.
In the second case $P$ is trivial by proposition \ref{abel}.
\end{proof}

In order to find simple quandles 
the following theorem of R.~Guralnick from \cite{andrug3} is essential:
\begin{theorem}
Let $G$ be an image-cyclic group with $Z(G)=1$.
Then there exist a simple group $L$, a positive integer $t$, 
and a cyclic subgroup $C=\langle\psi\rangle$ of $\aut(N)$, 
where $N=L^t$, such that one of the following holds:
\begin{itemize}
\item
$L$ is abelian, and $\psi$ acts irreducibly on $N$.
Moreover $G$ is the semidirect product $N\rtimes C$ of $N$ and $C$.
\item
$L$ is simple non-abelian,
$G=NC\cong(N\rtimes C)Z(N\rtimes C)$
and $\psi$ acts by $\psi(\ell_1,\dots,\ell_t)=(\theta(\ell_t),\ell_1,\dots,\ell_{t-1})$
for some $\theta\in\aut(L)$.
\end{itemize}
Conversely all such groups have the desired properties.
The group $L$, the number $t$ and the class of $\psi$ in $\aut(N)/\intr(N)$
are uniquely determined by $G$.
\end{theorem}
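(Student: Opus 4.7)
My plan is to let a minimal normal subgroup $N \trianglelefteq G$ drive all the structure. Since $G$ is image-cyclic and $N$ is proper and nontrivial, the quotient $G/N$ must be cyclic by hypothesis. Standard minimal-normal theory gives $N \cong L^t$ for a single simple group $L$, and this dichotomy (abelian $L$ versus nonabelian $L$) is exactly what produces the two cases of the theorem. Let $C = \langle\psi\rangle$ where $\psi$ lifts a generator of $G/N$, so $G = NC$ in either case, and the task is to pin down $L$, $t$ and $\psi$ precisely.

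In the abelian case, $L = \ZZ/p$ and $N = \FF^t_p$ as an $\FF_p$-vector space. Minimality of $N$ forces the cyclic action of $C$ on $N$ to be irreducible, since any $C$-invariant subspace would be a normal subgroup of $G$ contained in $N$. To obtain the semidirect product $G = N \rtimes C$, one must show that the extension $1 \to N \to G \to G/N \to 1$ splits. When $\gcd(p, |G/N|) = 1$ this is Schur--Zassenhaus; the technical heart of the argument is ruling out the non-coprime case, where I would combine image-cyclicity with $Z(G) = 1$ by exhibiting in a hypothetical non-split situation either a nontrivial element of $Z(G)$ or a proper quotient of $G$ that is neither cyclic nor an isomorphism, contradicting image-cyclic.

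In the nonabelian case, $\aut(N) = \aut(L) \wr S_t$. Minimality of $N$ forces $G$ to permute the $t$ simple factors transitively, and since $G/N$ is cyclic this permutation action is cyclic, so a generator $\psi$ of $G/N$ is conjugate in $\aut(N)$ to the shift $(\ell_1,\ldots,\ell_t) \mapsto (\theta(\ell_t), \ell_1, \ldots, \ell_{t-1})$ for some $\theta \in \aut(L)$; this gives the required normal form. A short calculation identifies the $\psi$-fixed subgroup of $N$ as the diagonal copy of $L^\theta$, and from this one computes $Z(N \rtimes C)$ directly. The hypothesis $Z(G) = 1$ then forces $G$ to be the corresponding central quotient, which is exactly the shape in the statement.

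For the converse I would verify that both model groups have trivial centre (by irreducibility in case 1, and by the fixed-point calculation in case 2) and are image-cyclic, the latter by enumerating the normal subgroups of the explicit constructions and checking that each proper nontrivial quotient is cyclic. Uniqueness of the triple $(L, t, [\psi])$ follows because the minimal normal subgroup of $G$ is canonical, its factorization into simple factors recovers both $L$ and $t$, and the image in $\aut(N)/\intr(N)$ of any generator of $G/N$ is independent of the choice of lift. The main obstacle I expect is securing the semidirect splitting in the abelian case without a coprimality hypothesis, since this is where the interplay between image-cyclic and $Z(G) = 1$ must be used in a genuinely delicate way; handling the centre bookkeeping in the nonabelian case is longer but essentially mechanical.
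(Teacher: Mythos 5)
First, a point of comparison: the paper does not prove this statement at all --- it is quoted as a theorem of R.~Guralnick, with the proof deferred to \cite{andrug3}. So there is no in-paper argument to measure your route against; the only question is whether your text is itself a proof. It is not: it is an outline in which the steps you yourself identify as the crux are left as intentions. The most serious instance is the splitting $G=N\rtimes C$ in the abelian case. You correctly observe that Schur--Zassenhaus covers only the coprime case, and then say you ``would'' rule out the non-coprime case by producing either a central element or a bad quotient; no such argument is given, and this is precisely the delicate interaction of image-cyclicity with $Z(G)=1$ that the whole theorem turns on. Closely related, and also missing, is the identification of the centralizer: to regard a lift $\psi$ of a generator of $G/N$ as an element of $\aut(N)$, and to get $C=\langle\psi\rangle\leq\aut(N)$ with $G=NC$, you need $C_G(N)=N$ in the abelian case and $C_G(N)=1$ in the nonabelian case. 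Neither is established, and both require the hypotheses in an essential way (e.g.\ via $G'\leq N$ from cyclicity of $G/N$, minimality of $N$, and $Z(G)=1$).

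There are further unexecuted steps of the same kind. The uniqueness claim rests on your assertion that ``the minimal normal subgroup of $G$ is canonical''; in general a group has many minimal normal subgroups, so you must prove uniqueness here (it does follow --- two distinct minimal normal subgroups intersect trivially, so $G$ embeds in the product of two proper, hence cyclic, quotients and is abelian, contradicting $Z(G)=1$ for $G\neq 1$ --- but this argument has to appear, since the whole uniqueness statement and the well-definedness of the class of $\psi$ in $\aut(N)/\intr(N)$ hang on it). The converse direction (``all such groups have the desired properties'') is likewise only promised: verifying that the model groups are image-cyclic requires an actual analysis of their normal subgroups, in particular showing in the nonabelian case that every nontrivial normal subgroup of $(N\rtimes C)/Z(N\rtimes C)$ contains the image of $N$, and this is not mechanical bookkeeping. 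Finally, even the normal-form step for $\psi$ in the nonabelian case (transitive cyclic image in $\CALS_t$ is regular, generated by a $t$-cycle, and any automorphism over such a cycle is conjugate to the twisted shift) is asserted rather than shown. In short, the skeleton matches the standard proof of Guralnick's theorem, but as submitted it has genuine gaps at exactly the load-bearing points.
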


The simple quandles are now found as conjugacy classes
of groups of the above kind which are generating.

\begin{example}
The conjugacy class of a $3$-cycle in $\CALA_4$ forms a connected quandle $Q_4$ of order $4$.
Here $G=\CALA_4$ is image-cyclic of the first type, 
with $L$ of order $2$ and $t=2$ and $C$ of order $3$.
This quandle is the Alexander quandle with $M$ the field of $4$ elements
and $T$ multiplication with $w\not=0,1$.
\end{example}

\begin{example}
\label{ten}
The $2$-cycles in $\CALS_5$  form a connected quandle $Q_{10}$ of order $10$.
Here $G=\CALS_5$ is image-cyclic of the second type, 
with $L=\CALA_5$ and $t=1$ and $C$ of order $2$.
\end{example}

\begin{example}
\label{twelve}
The conjugacy class of a $5$-cycle in $\CALA_5$ forms a connected quandle $Q_{12}$ of order $12$.
Here $G=\CALA_5$ is image-cyclic of the second type, 
with $L=\CALA_5$ and $t=1$ and $C$ trivial.
\end{example}

In the first case of the theorem
a conjugacy $C$ class of $G$ which generates $G$
is an Alexander quandle associated to $M=N$ and $T=\psi$.

\begin{example}
Consider the Alexander quandle $Q$ associated to $M=\ZZ/(15)$
and $T$ given by $Tx=2x$.
Then $M$ has a $T$-invariant subgroup $3M$,
and this gives rise to a normal subgroup $H$ of $G=\intr(Q)$.
Since $G/H$ is not cyclic $G$ is not image-cyclic.
So not all Alexander quandles  are explained by this theorem
\end{example}

It is rather easy to construct Alexander quandles of a given order.
Thus if we are looking for non-obvious connected quandles 
we may concentrate on the second case of the theorem.
In many cases the order of a conjugacy class is much too large:

\begin{proposition}
If $t>1$ then the cardinality of a generating conjugacy class
is at least the order of $L$.
\end{proposition}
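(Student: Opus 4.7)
The plan is to lower-bound $|K|$ by the size of a single $N$-conjugation orbit and to estimate that orbit using the cyclic shift built into $\psi$. Fix $g\in K$ and a lift $\tilde g = n_0\psi^k \in N\rtimes C$ with $n_0\in N=L^t$. I would first show that $\gcd(k,t)=1$. Conjugation by elements of $G$ permutes the $t$ simple factors $L_i$ of $N$: the subgroup $N$ acts by inner automorphisms of $N$ and hence preserves each $L_i$ setwise, while $\psi$ cyclically shifts them. This yields a homomorphism $G\to\ZZ/t$ sending the class of $\psi^j$ to $j\bmod t$. Since $K$ generates $G$, its image in $\ZZ/t$ must generate $\ZZ/t$, forcing $\gcd(k,t)=1$.

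Next I would compute the $N$-conjugation stabilizer of $g$. A direct calculation shows $m\tilde g m^{-1} = \tilde g$ iff $n_0^{-1}mn_0 = \psi^k(m)$, i.e.\ iff $m\in N^\beta$ for the automorphism $\beta\colon N\to N$ defined by $\beta(m)=n_0\psi^k(m)n_0^{-1}$. Because $Z(L)=1$, one checks that $Z(N\rtimes C)\cap N = 1$, and consequently two distinct $N$-conjugates of $\tilde g$ inside $N\rtimes C$ remain distinct after passing to the quotient $G=(N\rtimes C)/Z(N\rtimes C)$. Hence the $N$-orbit of $g$ inside $G$ genuinely has size $|N|/|N^\beta|$.

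The main step is the bound $|N^\beta|\le |L|$. Writing $\beta$ coordinate-wise gives $\beta(m)_i=\alpha_i(m_{\sigma^k(i)})$ for certain automorphisms $\alpha_i\in\aut(L)$ built from $\theta$ and from conjugation by $(n_0)_i$, where $\sigma$ is the cyclic shift of $\{1,\dots,t\}$. Because $\gcd(k,t)=1$, $\sigma^k$ is a single $t$-cycle; iterating the fixed-point equation $m_i=\alpha_i(m_{\sigma^k(i)})$ then determines every coordinate of a $\beta$-fixed tuple from any single coordinate, subject to one consistency constraint around the loop. That constraint forces the free coordinate into the fixed subgroup of a particular automorphism of $L$, whence $|N^\beta|\le|L|$.

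Putting everything together,
\begin{equation*}
|K|\;\ge\;|N|/|N^\beta|\;\ge\;|L|^t/|L|\;=\;|L|^{t-1}\;\ge\;|L|,
\end{equation*}
the last inequality because $t\ge 2$. The main obstacle is the bookkeeping of the middle paragraph---verifying that the inner twist by $n_0$ does not disturb the cyclic-shift shape of $\beta$, and that the passage to the quotient by $Z(N\rtimes C)$ does not shrink the $N$-orbit of $g$ in $G$.
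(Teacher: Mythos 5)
Your argument is correct, but it takes a genuinely different route from the paper's. The paper works directly inside the conjugacy class: writing $g=(\ell_1,\dots,\ell_t)\sigma^j$ and using generation only to rule out $j\equiv 0\pmod t$, it conjugates by elements $h=(k_1,\dots,k_t)\sigma$ and observes that the last coordinate of $h^{-1}gh$, namely $\theta^{-1}(k_1^{-1}\ell_1)\theta^{q}(k_{t-r+1})$, sweeps out all of $L$ as $k_{t-r+1}$ varies (independently of $k_1$, since $0<r<t$), so the class already contains at least $|L|$ elements. You instead bound the class from below by a single $N$-orbit via orbit--stabilizer: you identify the $N$-stabilizer of $g$ with the fixed subgroup $N^\beta$ of a twisted shift, extract $\gcd(k,t)=1$ from generation via the homomorphism $G\to\ZZ/t$ (a stronger fact than the paper needs, but correctly derived since a conjugacy class has a single image in an abelian quotient), and conclude that $N^\beta$ embeds into $L$ by projecting onto one coordinate of the $t$-cycle of fixed-point equations. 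Your ``one checks'' steps are indeed routine: $Z(N\rtimes C)\cap N\subseteq Z(N)=1$, and the difference of two $N$-conjugates of $\tilde g$ lies in $N$, so the $N$-orbit does not shrink in $G$; likewise conjugation by $n_0$ only composes each $\alpha_i$ with an inner automorphism and does not disturb the shift pattern. What your approach buys is a stronger conclusion, $|K|\ge|N|/|N^\beta|\ge|L|^{t-1}$, together with a structural description of the $N$-part of the centralizer as a twisted diagonal subgroup; what the paper's approach buys is brevity---one explicit conjugation computation, with no need for the center argument or the $\gcd$ refinement.
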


\begin{proof}
The group $G$ is generated by $L^t$ and an element $\sigma\in G$
such that conjugation with $\sigma$ realizes the action of $\psi$:
\begin{equation*}
\sigma(\ell_1,\dots,\ell_t)\sigma^{-1}=(\theta(\ell_t),\ell_1,\dots,\ell_{t-1})
\end{equation*}
Let $Q$ be generating conjugacy class and let $g\in Q$.
For certain $\ell_i$ and $j$ we can write $g=(\ell_1,\dots,\ell_t)\sigma^j$.
If $j$ would be a multiple of $t$ then all elements of $G$
would have the form $(z_1,\dots,z_t)\sigma^{mt}$,
and in particular $\sigma\in L^t\sigma^{mt}$ for certain $m$.
This easily leads to a contradiction.
Write $j=qt+r$ with $0<r<t$.
If we conjugate by $h=(k_1,\dots,k_t)\sigma$ we find
\begin{equation*}
\begin{split}
h^{-1}gh
&=\sigma^{-1}(k_1^{-1},\dots,k_t^{-1})(\ell_1,\dots,\ell_t)\sigma^j(k_1,\dots,k_t)\sigma\\
&=(\dots,\theta^{-1}(k_1^{-1}\ell_1)\theta^q(k_{t-r+1}))\sigma^j
\end{split}
\end{equation*}
Since $k_1$ and $k_{t-r+1}$ can be chosen independently,
the last coordinate can take any value in $L$.
\end{proof}

Thus if we are looking for non-Alexander simple quandles of order less than $60$
we can restrict our attention to the case $t=1$,
since the order of any nonabelian simple group is at least $60$.

Now $L$ is a subgroup of $N=G=\intr(Q)$ 
which is a subgroup of the group of all permutations of $Q$.
Thus if we are looking for such quandles of order at most $14$
we can restrict attention to finite simple groups $L$
which are subgroups of $\CALS_{14}$.
In particular the order should divide  $14!=2^11\cdot3^5\cdot5^2\cdot7^2\cdot11\cdot13$.
This already excludes most of the simple groups from the classification,
and the remaining ones can be found in the Atlas \cite{atlas},
together with information about subgroups.
Inspection of this infomation reveals that only $L=\CALA_5$ 
survives and proves the following theorem:

\begin{theorem}
The non-Alexander simple connected quandles of order $\leq 14$
are $Q_{10}$ and $Q_{12}$  discussed in example \ref{ten} and example \ref{twelve}.
\end{theorem}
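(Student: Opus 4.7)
The plan is to combine Guralnick's theorem with the preceding proposition on the size of a generating conjugacy class. By Proposition \ref{imcyc} a simple connected quandle $Q$ is faithful and $G=\intr(Q)$ is image-cyclic, while Proposition \ref{zg} gives $Z(G)=1$. Hence Guralnick's theorem applies, and since $Q$ is non-Alexander we fall into its second case, with $L$ nonabelian simple. The preceding proposition then shows that if $t>1$ the class $Q$ has cardinality at least $|L|\geq 60$, contradicting $|Q|\leq 14$; so $t=1$ and $G=L\cdot\langle\theta\rangle$ sits inside $\aut(L)$.

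Since $G=\intr(Q)$ acts faithfully on $Q$, we obtain $G\hookrightarrow\CALS_{|Q|}\subseteq\CALS_{14}$, and in particular $L$ is a nonabelian simple subgroup of $\CALS_{14}$. Thus $|L|$ divides
\begin{equation*}
14!=2^{11}\cdot 3^5\cdot 5^2\cdot 7^2\cdot 11\cdot 13,
\end{equation*}
which by the classification of finite simple groups leaves only a short explicit list of candidates for $L$ that can be read off from the Atlas \cite{atlas}.

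For each candidate I would inspect in the Atlas the conjugacy classes of every cyclic extension $G=L\cdot\langle\theta\rangle\subseteq\aut(L)$, retaining only generating classes of size at most $14$. The decisive numerical estimate is $|C_G(g)|\geq|G|/14$: for every nonabelian simple $L$ with $|L|\geq 168$ the centralizers of noncentral elements in every admissible extension are too small, so no class of size $\leq 14$ arises. Only $L=\CALA_5$ survives, with two admissible cyclic extensions. Inside $G=\CALA_5$ the generating class of $5$-cycles has size $12$, yielding $Q_{12}$ of Example \ref{twelve}; inside $G=\CALS_5$ the generating class of transpositions has size $10$, yielding $Q_{10}$ of Example \ref{ten}. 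These are therefore the only two non-Alexander simple connected quandles of order $\leq 14$.

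The main obstacle is the Atlas bookkeeping that rules out every other nonabelian simple group on the candidate list. This is finite but must be done uniformly across all cyclic extensions $L\cdot\langle\theta\rangle$ in $\aut(L)$; the saving grace is that the outer automorphism group of each relevant $L$ is small, and the centralizer estimate above usually disposes of an entire $L$ at a glance, so the verification reduces to a short case-by-case tabulation.
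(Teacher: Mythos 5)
Your proposal follows the paper's own route essentially step for step: Propositions \ref{zg} and \ref{imcyc} to invoke Guralnick's theorem, the conjugacy-class-size proposition to force $t=1$ (since nonabelian simple groups have order at least $60$), the embedding $L\subseteq G=\intr(Q)\subseteq\CALS_{14}$ with $|L|$ dividing $14!$, and then Atlas inspection of the surviving candidates to conclude that only $L=\CALA_5$ occurs, with the transposition class in $\CALS_5$ and the $5$-cycle class in $\CALA_5$ giving $Q_{10}$ and $Q_{12}$. Your explicit centralizer bound $|C_G(g)|\geq |G|/14$ is just a slightly more concrete way of carrying out the case check the paper leaves to ``inspection,'' so the argument is correct and not materially different.
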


In particular there are no such quandles of order $14$.
Since an abelian group $M$ of order $14$
can not have an automorphism $T$ such that $1-T$ is invertible
there are no Alexander quandles of order $14$ either.
Therefore it follows from our theorems that there are no connected quandles
of order $14$,
and only one of order $10$.

\section{Classification.}
As part of his Bachelor Thesis my student Julius Witte has made
a computer search for connected quandles of order $n\leq12$.
Since all simple quandles are found by the method of the last section,
one can restrict attention to quandles $Q$ which have a surjective
quandle homomorphism to a connected quandle $P$ of order $m$ dividing $n$.
In that case the group $G=\intr(Q)$ is not only a subgroup of
of $\CALS_n$ but in fact of the wreath product $\CALS_{n/m}^m\rtimes\CALS_m$; 
this group is usually much smaller.
The program takes as input a subgroup $\Gamma$ of $\CALS_n$
and tries to find all connected quandles $Q$ with $G<\Gamma$.

By \cite{grana3} all connected quandles of prime square order
are Alexander quandles.
So in the the selected range only orders $6$, $8$ and $12$
have to be investigated.
There are  two connected quandles $Q_{6,2}$ and $Q_{6,4}$ 
of order $6$ which have been described in example \ref{six}.
There are three connected quandles of order $8$.
One is $Q_8$ as  described in example \ref{eight},
and the other two are Alexander quandles associated 
to multiplication maps on the field of $8$ elements.

There are ten connected quandles of order $12$.
One is the simple quandle $Q_{12}$  described in example \ref{twelve}.
We describe the other nine by tables.
\vfill\eject

\noindent
{\bf Case 1.}
Faithful.
The order of $\intr(Q)$ is $216$.
It surjects to $Q_4$.
\begin{equation*}
\begin{tabular}{|l|l|l|l|l|l|l|l|l|l|l|l|l|}
\hline
&1&2&3&4&5&6&7&8&9&10&11&12\\
\hline
1&1&1&1&12&11&10&5&4&6&9&7&8\\
2&2&2&2&11&10&12&6&5&4&8&9&7\\
3&3&3&3&10&12&11&4&6&5&7&8&9\\
4&8&9&7&4&4&4&10&12&11&3&2&1\\
5&7&8&9&5&5&5&11&10&12&2&1&3\\
6&9&7&8&6&6&6&12&11&10&1&3&2\\
7&11&12&10&3&1&2&7&7&7&4&5&6\\
8&12&10&11&1&2&3&8&8&8&5&6&4\\
9&10&11&12&2&3&1&9&9&9&6&4&5\\
10&6&5&4&7&8&9&3&2&1&10&10&10\\
11&5&4&6&9&7&8&1&3&2&11&11&11\\
12&4&6&5&8&9&7&2&1&3&12&12&12\\
\hline
\end{tabular}
\end{equation*}

\noindent
{\bf Case 2.}
Faithful.
The order of $\intr(Q)$ is $96$.
It surjects to $Q_{6,2}$.
\begin{equation*}
\begin{tabular}{|l|l|l|l|l|l|l|l|l|l|l|l|l|}
\hline
&1&2&3&4&5&6&7&8&9&10&11&12\\
\hline
1&1&1&1&1&9&11&10&12&5&7&6&8\\
2&2&2&2&2&12&10&11&9&8&6&7&5\\
3&3&3&3&3&10&12&9&11&7&5&8&6\\
4&4&4&4&4&11&9&12&10&6&8&5&7\\
5&9&12&10&11&5&5&5&5&1&3&4&2\\
6&11&10&12&9&6&6&6&6&4&2&1&3\\
7&10&11&9&12&7&7&7&7&3&1&2&4\\
8&12&9&11&10&8&8&8&8&2&4&3&1\\
9&5&8&7&6&1&4&3&2&9&9&9&9\\
10&7&6&5&8&3&2&1&4&10&10&10&10\\
11&6&7&8&5&4&1&2&3&11&11&11&11\\
12&8&5&6&7&2&3&4&1&12&12&12&12\\
\hline
\end{tabular}
\end{equation*}

\noindent
{\bf Case 3.}
Not faithful: $\phi(Q)$ has order $6$.
The order of $\intr(Q)$ is $24$.
It surjects to $Q_{6,2}$.
\begin{equation*}
\begin{tabular}{|l|l|l|l|l|l|l|l|l|l|l|l|l|}
\hline
&1&2&3&4&5&6&7&8&9&10&11&12\\
\hline
1&1&3&1&3&9&9&11&11&6&7&7&6\\
2&4&2&4&2&10&10&12&12&7&6&6&7\\
3&3&1&3&1&12&12&10&10&5&8&8&5\\
4&2&4&2&4&11&11&9&9&8&5&5&8\\
5&12&11&12&11&5&5&6&6&3&4&4&3\\
6&9&10&9&10&6&6&5&5&1&2&2&1\\
7&11&12&11&12&8&8&7&7&2&1&1&2\\
8&10&9&10&9&7&7&8&8&4&3&3&4\\
9&6&8&6&8&1&1&4&4&9&12&12&9\\
10&8&6&8&6&2&2&3&3&11&10&10&11\\
11&7&5&7&5&4&4&1&1&10&11&11&10\\
12&5&7&5&7&3&3&2&2&12&9&9&12\\
\hline
\end{tabular}
\end{equation*}

\noindent
{\bf Case 4.}
Faithful.
The order of $\intr(Q)$ is $96$.
It surjects to $Q_{6,2}$.
\begin{equation*}
\begin{tabular}{|l|l|l|l|l|l|l|l|l|l|l|l|l|}
\hline
&1&2&3&4&5&6&7&8&9&10&11&12\\
\hline
1&1&3&1&3&9&12&11&10&5&8&7&6\\
2&4&2&4&2&11&10&12&9&8&6&5&7\\
3&3&1&3&1&12&9&10&11&6&7&8&5\\
4&2&4&2&4&10&11&9&12&7&5&6&8\\
5&9&11&12&10&5&5&6&6&1&4&2&3\\
6&12&10&9&11&6&6&5&5&3&2&4&1\\
7&11&12&10&9&8&8&7&7&4&3&1&2\\
8&10&9&11&12&7&7&8&8&2&1&3&4\\
9&5&8&6&7&1&3&4&2&9&12&12&9\\
10&8&6&7&5&4&2&3&1&11&10&10&11\\
11&7&5&8&6&2&4&1&3&10&11&11&10\\
12&6&7&5&8&3&1&2&4&12&9&9&12\\
\hline
\end{tabular}
\end{equation*}

\noindent
{\bf Case 5.}
Faithful.
The order of $\intr(Q)$ is $96$.
It surjects to $Q_{6,2}$ and to $Q_{6,4}$.
\begin{equation*}
\begin{tabular}{|l|l|l|l|l|l|l|l|l|l|l|l|l|}
\hline
&1&2&3&4&5&6&7&8&9&10&11&12\\
\hline
1&1&1&1&1&12&10&11&9&5&7&6&8\\
2&2&2&2&2&10&12&9&11&6&8&5&7\\
3&3&3&3&3&11&9&12&10&7&5&8&6\\
4&4&4&4&4&9&11&10&12&8&6&7&5\\
5&9&11&10&12&5&5&5&5&4&2&3&1\\
6&11&9&12&10&6&6&6&6&3&1&4&2\\
7&10&12&9&11&7&7&7&7&2&4&1&3\\
8&12&10&11&9&8&8&8&8&1&3&2&4\\
9&8&7&6&5&1&2&3&4&9&9&9&9\\
10&6&5&8&7&3&4&1&2&10&10&10&10\\
11&7&8&5&6&2&1&4&3&11&11&11&11\\
12&5&6&7&8&4&3&2&1&12&12&12&12\\
\hline
\end{tabular}
\end{equation*}

\noindent
{\bf Case 6.}
Not faithful: $\phi(Q)$ has order $6$.
The order of $\intr(Q)$ is $48$.
It surjects to $Q_{6,4}$.
\begin{equation*}
\begin{tabular}{|l|l|l|l|l|l|l|l|l|l|l|l|l|}
\hline
&1&2&3&4&5&6&7&8&9&10&11&12\\
\hline
1&1&3&1&3&10&9&10&9&5&6&6&5\\
2&4&2&4&2&9&10&9&10&8&7&7&8\\
3&3&1&3&1&11&12&11&12&7&8&8&7\\
4&2&4&2&4&12&11&12&11&6&5&5&6\\
5&9&11&9&11&5&7&5&7&2&3&3&2\\
6&10&12&10&12&8&6&8&6&3&2&2&3\\
7&12&10&12&10&7&5&7&5&4&1&1&4\\
8&11&9&11&9&6&8&6&8&1&4&4&1\\
9&6&7&6&7&3&4&3&4&9&12&12&9\\
10&5&8&5&8&4&3&4&3&11&10&10&11\\
11&7&6&7&6&2&1&2&1&10&11&11&10\\
12&8&5&8&5&1&2&1&2&12&9&9&12\\
\hline
\end{tabular}
\end{equation*}

\noindent
{\bf Case 7.}
Faithful.
The order of $\intr(Q)$ is $96$.
It surjects to $Q_{6,2}$.
\begin{equation*}
\begin{tabular}{|l|l|l|l|l|l|l|l|l|l|l|l|l|}
\hline
&1&2&3&4&5&6&7&8&9&10&11&12\\
\hline
1&1&1&2&2&9&10&11&12&8&7&6&5\\
2&2&2&1&1&12&11&10&9&5&6&7&8\\
3&4&4&3&3&10&12&9&11&6&8&5&7\\
4&3&3&4&4&11&9&12&10&7&5&8&6\\
5&12&9&11&10&5&8&8&5&1&3&4&2\\
6&11&10&9&12&7&6&6&7&4&1&2&3\\
7&10&11&12&9&6&7&7&6&3&2&1&4\\
8&9&12&10&11&8&5&5&8&2&4&3&1\\
9&5&8&7&6&2&3&4&1&9&12&12&9\\
10&6&7&5&8&4&2&1&3&11&10&10&11\\
11&7&6&8&5&3&1&2&4&10&11&11&10\\
12&8&5&6&7&1&4&3&2&12&9&9&12\\
\hline
\end{tabular}
\end{equation*}

\noindent
{\bf Case 8.}
The unique Alexander quandle of order $12$,
a product $Q_3$ and $Q_4$.
\begin{equation*}
\begin{tabular}{|l|l|l|l|l|l|l|l|l|l|l|l|l|}
\hline
&1&2&3&4&5&6&7&8&9&10&11&12\\
\hline
1&1&4&2&3&9&12&10&11&8&5&7&6\\
2&3&2&4&1&10&11&9&12&7&6&8&5\\
3&4&1&3&2&11&10&12&9&6&7&5&8\\
4&2&3&1&4&12&9&11&10&5&8&6&7\\
5&10&12&11&9&5&8&6&7&1&2&3&4\\
6&12&10&9&11&7&6&8&5&4&3&2&1\\
7&11&9&10&12&8&5&7&6&2&1&4&3\\
8&9&11&12&10&6&7&5&8&3&4&1&2\\
9&5&7&8&6&4&3&2&1&9&12&10&11\\
10&7&5&6&8&1&2&3&4&11&10&12&9\\
11&8&6&5&7&3&4&1&2&12&9&11&10\\
12&6&8&7&5&2&1&4&3&10&11&9&12\\
\hline
\end{tabular}
\end{equation*}

\noindent
{\bf Case 9.}
Faithful.
The order of $\intr(Q)$ is $96$.
It surjects to $Q_{6,4}$.
\begin{equation*}
\begin{tabular}{|l|l|l|l|l|l|l|l|l|l|l|l|l|}
\hline
&1&2&3&4&5&6&7&8&9&10&11&12\\
\hline
1&1&3&1&3&12&10&9&11&8&7&5&6\\
2&4&2&4&2&10&9&11&12&7&6&8&5\\
3&3&1&3&1&9&11&12&10&6&5&7&8\\
4&2&4&2&4&11&12&10&9&5&8&6&7\\
5&11&12&10&9&5&7&5&7&3&2&4&1\\
6&12&10&9&11&8&6&8&6&2&1&3&4\\
7&10&9&11&12&7&5&7&5&1&4&2&3\\
8&9&11&12&10&6&8&6&8&4&3&1&2\\
9&7&6&5&8&4&3&2&1&9&12&12&9\\
10&6&5&8&7&3&2&1&4&11&10&10&11\\
11&8&7&6&5&1&4&3&2&10&11&11&10\\
12&5&8&7&6&2&1&4&3&12&9&9&12\\
\hline
\end{tabular}
\end{equation*}

\vfill\eject

\end{document}